\documentclass{amsart}
\usepackage{amssymb}
\usepackage{graphicx}
\usepackage{amsfonts}
\usepackage{amssymb}
\usepackage{amsmath}
\usepackage{amsthm}
\usepackage{enumerate}
\usepackage{tabularx}
\usepackage{centernot}
\usepackage{mathtools}
\usepackage{stmaryrd}
\usepackage{amsthm,amssymb}
\usepackage{etoolbox}
\usepackage{tikz}
\usepackage{amssymb}
\usetikzlibrary{matrix}
\usepackage{tikz-cd}
\usepackage{tikz}
\usepackage{marginnote}
\definecolor{mygray}{gray}{0.85}
\usepackage[backgroundcolor=mygray,colorinlistoftodos,prependcaption,textsize=small]{todonotes}
\usepackage{xargs}                      

\renewcommand{\leq}{\leqslant}

\makeatletter
\def\subsection{\@startsection{subsection}{3}%
  \z@{.5\linespacing\@plus.7\linespacing}{.3\linespacing}%
  {\bfseries\centering}}
\makeatother

\makeatletter
\def\subsubsection{\@startsection{subsubsection}{3}%
  \z@{.5\linespacing\@plus.7\linespacing}{.3\linespacing}%
  {\centering}}
\makeatother

\makeatletter
\def\myfnt{\ifx\protect\@typeset@protect\expandafter\footnote\else\expandafter\@gobble\fi}
\makeatother

\newtheorem{theorem}{Theorem}

\newtheorem{corollary}[theorem]{Corollary}
\newtheorem{definition}[theorem]{Definition}
\newtheorem{lemma}[theorem]{Lemma}

\newtheorem{fact}[theorem]{Fact}
\newtheorem{strategy}[theorem]{Strategy}
\newtheorem{remark}[theorem]{Remark}
\newtheorem{notation}[theorem]{Notation}

\newtheorem{oproblem}[theorem]{Open Problem}
\newtheorem{construction}[theorem]{Construction}

\newcounter{claimcounter}
\numberwithin{claimcounter}{theorem}


\setcounter{MaxMatrixCols}{20}

\begin{document}

\begin{abstract} For every infinite graph $\Gamma$ we construct a non-Desarguesian projective plane $P^*_{\Gamma}$ of the same size as $\Gamma$ such that $Aut(\Gamma) \cong Aut(P^*_{\Gamma})$ and $\Gamma_1 \cong \Gamma_2$ iff $P^*_{\Gamma_1} \cong P^*_{\Gamma_2}$. Furthermore, restricted to structures with domain $\omega$, the map $\Gamma \mapsto P^*_{\Gamma}$ is Borel. On one side, this shows that the class of countable non-Desarguesian projective planes is Borel complete, and thus not admitting a Ulm type system of invariants. On the other side, we rediscover the main result of \cite{projective} on the realizability of every group as the group of collineations of some projective plane. Finally, we use classical results of projective geometry to prove that the class of \mbox{countable Pappian projective planes is Borel complete.}
\end{abstract}

\title{The Class of Non-Desarguesian Projective Planes is Borel Complete}
\thanks{Partially supported by European Research Council grant 338821.}


\author{Gianluca Paolini}
\address{Einstein Institute of Mathematics,  The Hebrew University of Jerusalem, Israel}

\maketitle

\section{Introduction}

	
	\begin{definition}\label{def_plane} A {\em plane} is a system of points and lines satisfying:
	\begin{enumerate}[(A)]
	\item every pair of distinct points determines a unique line;
	\item every pair of distinct lines intersects in at most one point;
	\item every line contains at least two points;
	\item there exist at least three non-collinear points.
\end{enumerate}
A plane is {\em projective} if in addition:
	\begin{enumerate}[(B')]
	\item every pair of lines intersects in exactly one point.
\end{enumerate}
\end{definition}
	
	As well-known (see e.g. \cite{rota} and \cite[pg. 148]{kung}), the class of planes (resp. projective planes) corresponds canonically to the class of simple rank $3$ matroids (resp. simple modular rank $3$ matroids), or, equivalently, to the class of geometric lattices of rank $3$ (resp. modular geometric lattices of rank $3$). We prove:

	\begin{theorem}\label{main_theorem} For every graph $\Gamma = (V, E)$ there exists a plane $P_{\Gamma}$ such that:
	\begin{enumerate}[(1)]
	\item if $\Gamma$ is finite, then $P_{\Gamma}$ has size $3|V| + |E| + 17$;
	\item if $\Gamma$ is infinite, then $P_{\Gamma}$ has the same size of $\Gamma$;
	\item except for $17$ points, every point of  $P_{\Gamma}$ is incident with at most two non-trivial lines;
	\item\label{auto}  $Aut(\Gamma) \cong Aut(P_{\Gamma})$;
	\item\label{iso_invariance} $\Gamma_1  \cong \Gamma_2$ if and only if $P_{\Gamma_1} \cong P_{\Gamma_2}$;
	\item restricted to structures with domain $\omega$, the map $\Gamma \mapsto P_{\Gamma}$ is Borel (with respect to the naturally associated Polish topologies).
\end{enumerate}
\end{theorem}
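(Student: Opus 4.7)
The plan is to encode the graph $\Gamma$ into a plane by means of a small \emph{rigid scaffold} $S$ of $17$ points (together with some non-trivial lines internal to $S$), plus local gadgets attached to the vertices and edges of $\Gamma$. The idea is that the non-trivial lines of the plane (those of cardinality at least $3$) carry all the combinatorial information: once axioms (B), (C), (D) are secured by the design, axiom (A) is obtained for free by declaring every remaining pair of points to form a \emph{trivial} two-point line.

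Concretely, I would first construct a finite configuration on $17$ points whose automorphism group is trivial and which is moreover \emph{definable} inside any enlargement of it by the gadgets, for instance as the set of points lying on at least three non-trivial lines. A standard device for rigidity is to use several non-trivial lines of pairwise distinct cardinalities together with one or two privileged anchor points. Next, for each vertex $v \in V$ I add a vertex gadget of $3$ new points, together with one non-trivial line $L_v$ through these three points and a designated anchor of $S$, positioned so that $L_v$ picks out the gadget canonically. For each edge $e = \{u,v\} \in E$ I add one new point $q_e$ and one non-trivial edge line $M_e$ through $q_e$ and one distinguished point of each of the two vertex gadgets of $u$ and $v$. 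The total point count is $17 + 3|V| + |E|$, giving (1) and (2); by construction, every non-scaffold point lies on at most the two non-trivial lines $L_v$ and $M_e$, yielding (3).

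For the automorphism statement, any automorphism $\sigma$ of $P_{\Gamma}$ preserves the notion of non-trivial line, hence preserves the distinguished set from which $S$ is read off, so it fixes $S$ setwise; rigidity of $S$ then forces $\sigma$ to fix $S$ pointwise. The lines $L_v$ now partition the remaining points into vertex gadgets, inducing a permutation $\pi$ of $V$, and the lines $M_e$ force $\pi$ to preserve the edge relation. Conversely, any $\pi \in Aut(\Gamma)$ extends, by relabelling of gadgets and edge points, to an automorphism of $P_{\Gamma}$, proving (4) and (5). The Borel property (6) is immediate because the entire recipe is uniformly first-order in $\Gamma$: once a canonical bijection between $\omega$ and the domain of $P_{\Gamma}$ is fixed, the atomic diagram of $P_{\Gamma}$ depends in a Borel way on the atomic diagram of $\Gamma$.

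The main obstacle is the combinatorial design of the scaffold and of the attachment data so that, simultaneously, no two prescribed non-trivial lines share more than one point (axiom (B)), every non-scaffold point has exactly the line-multiplicity required for $S$ to be definable and for (3) to hold, and no unintended automorphisms arise from accidental coincidences among the lines $L_v$ and $M_e$. The explicit constant $17$ reflects this: one must spell out a concrete small plane together with anchor and gadget specifications for which all these conditions can be verified by hand.
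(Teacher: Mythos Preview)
Your overall architecture is close to the paper's, but the edge gadget you describe cannot deliver item~(3). You propose, for each edge $e=\{u,v\}$, a new line $M_e$ through $q_e$ and one \emph{distinguished} point of each of the two vertex gadgets. If a vertex $v$ has degree $d\geq 2$, then its distinguished gadget point lies on $L_v$ together with all $d$ of the edge lines $M_e$, hence on $1+d\geq 3$ non-trivial lines; this violates the claim that every non-scaffold point lies on at most two non-trivial lines, and simultaneously destroys your suggested definability of $S$ as ``points on at least three non-trivial lines''. There is no way to repair this within your scheme by varying the distinguished point, since each gadget has only three points while $d$ is unbounded.

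The paper avoids this by a genuinely different encoding of edges. The three gadget points $p_{(\alpha,0)},p_{(\alpha,1)},p_{(\alpha,2)}$ for vertex $v_\alpha$ determine a vertex line $\ell_\alpha=p_{(\alpha,0)}\vee p_{(\alpha,1)}$ which does \emph{not} pass through the rigid $17$-point scaffold $P_*$; instead $p_{(\alpha,0)}$ is placed on one fixed line of $P_*$ and $p_{(\alpha,1)}$ on a second fixed line of $P_*$ (two anchor \emph{lines}, not a single anchor point). The lines $\ell_\alpha$ are then pairwise parallel, and an edge $e=\{v_\alpha,v_\beta\}$ is recorded by adding a single point $p_e$ lying on both $\ell_\alpha$ and $\ell_\beta$ (a one-point extension making those two parallel lines meet). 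Now every non-scaffold point lies on at most two non-trivial lines: $p_{(\alpha,0)}$ on its anchor line and $\ell_\alpha$; $p_{(\alpha,1)}$ likewise; $p_{(\alpha,2)}$ only on $\ell_\alpha$; and $p_e$ on exactly $\ell_\alpha$ and $\ell_\beta$. The graph is then read off as the intersection pattern of the $\ell_\alpha$'s. Correspondingly, $P_*$ is recovered not by ``$\geq 3$ non-trivial lines'' but by a two-step formula: a point is in $P_*$ iff it is on four non-trivial lines, or is on a non-trivial line containing such a point.
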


	We then combine (a modification of) the construction $\Gamma \mapsto P_{\Gamma}$ of Theorem \ref{main_theorem} with the the map $P \mapsto F(P)$ associating to each plane its free projective extension (in the sense of \cite{hall_proj}, cf. also Definition \ref{def_free_ext}), and prove:

\begin{theorem}\label{main_theorem_proj} For every infinite graph $\Gamma$ there exists a projective \mbox{plane $P^*_{\Gamma}$ such that:}
	\begin{enumerate}[(1)]
	\item $P^*_{\Gamma}$ has the same size of $\Gamma$;
	\item $P^*_{\Gamma}$ is non-Desarguesian;
	\item $Aut(\Gamma) \cong Aut(P^*_{\Gamma})$;
	\item $\Gamma_1  \cong \Gamma_2$ if and only if $P^*_{\Gamma_1} \cong P^*_{\Gamma_2}$;
	\item restricted to structures with domain $\omega$, the map $\Gamma \mapsto P^*_{\Gamma}$ is Borel (with respect to the naturally associated Polish topologies).
\end{enumerate}
\end{theorem}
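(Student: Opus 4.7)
The plan is to set $P^*_\Gamma := F(\widetilde{P}_\Gamma)$, where $F$ denotes Hall's free projective extension and $\widetilde{P}_\Gamma$ is a small modification of the plane $P_\Gamma$ supplied by Theorem \ref{main_theorem}. Recall that $F(P) = \bigcup_{n<\omega} P_n$, with $P_0 = P$ and $P_{n+1}$ obtained from $P_n$ by adjoining, for each pair of non-concurrent lines a new point of intersection, and for each pair of non-collinear points a new joining line; this yields the smallest projective plane containing $P$ as a subplane.

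The modification $P_\Gamma \rightsquigarrow \widetilde{P}_\Gamma$ is introduced to secure the rigidity clause $Aut(F(\widetilde{P}_\Gamma)) \cong Aut(\widetilde{P}_\Gamma)$. The classical tool here is the notion of a \emph{confined} subconfiguration: a subplane in which every point lies on at least three distinguished lines and every line contains at least three distinguished points. By a classical argument (going back to Hall) the confined part of a free projective extension is preserved by every automorphism; in particular, if $\widetilde{P}_\Gamma$ is itself the maximal confined subconfiguration of $F(\widetilde{P}_\Gamma)$, then restriction gives a bijection $Aut(F(\widetilde{P}_\Gamma)) \to Aut(\widetilde{P}_\Gamma)$. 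To achieve this I would enlarge the $17$ exceptional points of Theorem \ref{main_theorem}(3), and any ``thin'' points at which the original $P_\Gamma$ fails the three-lines condition, by attaching small rigid gadgets designed so that (i) every point and every line becomes sufficiently incident, (ii) no new automorphism is introduced, and (iii) $Aut(\widetilde{P}_\Gamma) \cong Aut(P_\Gamma) \cong Aut(\Gamma)$, with size, Borelness and isomorphism invariance inherited from Theorem \ref{main_theorem}.

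With this in hand the remaining clauses of Theorem \ref{main_theorem_proj} follow cleanly. The size estimate $|P^*_\Gamma| = |\Gamma|$ holds because $\Gamma$ is infinite and each stage $P_n \to P_{n+1}$ adds at most $|P_n|^2 = |P_n|$ new elements, so $|F(\widetilde{P}_\Gamma)| = |\widetilde{P}_\Gamma| = |\Gamma|$. The plane $P^*_\Gamma$ is non-Desarguesian by a second classical theorem of Hall: the free projective extension of any non-projective plane containing four points no three of which are collinear fails Desargues's axiom, since no Desargues configuration started outside the confined core can ever close up in finitely many free steps. Isomorphism invariance is immediate from the functoriality of $F$ with respect to isomorphisms combined with Theorem \ref{main_theorem}(5), and Borelness of $\Gamma \mapsto P^*_\Gamma$ follows from Borelness of $\Gamma \mapsto \widetilde{P}_\Gamma$ since $F$ is a countable iteration of Borel moves on structures with universe $\omega$. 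The main obstacle, and the only non-routine part of the argument, is designing the rigidifying gadget: it must enrich $P_\Gamma$ enough to become confined inside its free projective closure without either spoiling the identification of the automorphism group with $Aut(\Gamma)$ or breaking the Borelness of the assignment.
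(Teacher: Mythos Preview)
Your plan is essentially the paper's: set $P^*_\Gamma = F(P^+_\Gamma)$ for a confined enlargement $P^+_\Gamma$ of $P_\Gamma$, then invoke Hall's theorems (Facts~\ref{desargue_fact}, \ref{piper_fact1}, \ref{piper_fact2}) for non-Desarguesianness, isomorphism invariance, and automorphisms. Two points where the paper is more precise than your sketch are worth noting. First, the ``rigid gadget'' is Mendelsohn's confined plane $Q$ (Figure~\ref{myfigure}), glued in freely at a point via Construction~\ref{construction1} or along a line via Construction~\ref{construction2}; the recovery of $P_\Gamma$ inside $P^+_\Gamma$ is then by the first-order criterion ``incident with at least four non-trivial lines'' ($(\star_1)$ in the proof), which is what makes $Aut(P^+_\Gamma)\cong Aut(P_\Gamma)$ go through. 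Second, and this is a subtlety your one-shot description misses: gluing a copy of $Q$ along a line $\ell$ puts $\ell$ into a confined subplane, but it also creates many new (trivial) lines between the fresh $Q$-points and the old points, and these are not in any confined subplane. The paper therefore iterates the line-step $\omega$ times (stages $n=1,2,\dots$), so that every line of $P^+_\Gamma$ appears at some finite stage and is confined at the next; without this iteration $\widetilde{P}_\Gamma$ would not be confined and Facts~\ref{piper_fact1}--\ref{piper_fact2} would not apply.
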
	

	As a first consequence we get:
	
		\begin{definition}\label{def_classes}
	\begin{enumerate}[(1)]
	\item We say that a plane is simple (or $17$-simple) if except for $17$ points every point is incident with at most two non-trivial lines.
	\item We denote by $\mathbf{K}_1$ the class of countable simple planes.
	\item We denote by $\mathbf{K}_2$ the class of countable non-Desarguesian projective planes.
\end{enumerate}
\end{definition}

\begin{corollary}\label{main_cor2} Let $\mathbf{K}$ be either $\mathbf{K}_1$ or $\mathbf{K}_2$ (cf. Definition \ref{def_classes}). Then:
\begin{enumerate}[(1)]
	\item $\mathbf{K}$ is Borel complete (i.e. the isomorphism relation on $\mathbf{K}$ is $Sym(\omega)$-complete);
	\item $\mathbf{K}$ does not admit a Ulm type classification (cf. \cite{ulm_inv_paper} for this notion).
\end{enumerate}
\end{corollary}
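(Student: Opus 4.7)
The plan is to derive both parts of the corollary by composing the reductions furnished by Theorems \ref{main_theorem} and \ref{main_theorem_proj} with the classical Friedman--Stanley theorem, which asserts that isomorphism on countable graphs (coded as structures with domain $\omega$) is $S_\infty$-complete, i.e.\ Borel complete.

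For part (1) applied to $\mathbf{K}_1$, I would observe that the assignment $\Gamma \mapsto P_\Gamma$ of Theorem \ref{main_theorem} is Borel by item (6); by items (2) and (3) every $P_\Gamma$ with $\Gamma$ countably infinite is a countable $17$-simple plane, hence a member of $\mathbf{K}_1$; and by item (5) the map reduces graph isomorphism to plane isomorphism. Since graph isomorphism (even restricted to graphs with domain $\omega$) is $S_\infty$-complete, the isomorphism relation on $\mathbf{K}_1$ is also $S_\infty$-complete. The argument for $\mathbf{K}_2$ is structurally identical, now using Theorem \ref{main_theorem_proj}: items (1) and (2) put each $P^*_\Gamma$ into $\mathbf{K}_2$, item (4) provides the isomorphism reduction, and item (5) gives Borelness. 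Minor care is needed around the fact that Theorem~\ref{main_theorem_proj} is stated only for infinite $\Gamma$, but one may restrict the source to infinite graphs on $\omega$ without losing Borel completeness of the source (finite graphs form a Borel set of smooth isomorphism type and can be handled separately or ignored).

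For part (2), I would appeal to the general principle, articulated in \cite{ulm_inv_paper}, that $S_\infty$-completeness is incompatible with the existence of a Ulm-type system of invariants: any such system for $\mathbf{K}$ would pull back along the Borel reduction constructed in part (1) to yield a Ulm-type classification of countable graphs, contradicting the fact that graph isomorphism already sits at the maximal level of the Borel reducibility hierarchy.

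No essential obstacle arises; the proof really amounts to the observation that Theorems~\ref{main_theorem} and~\ref{main_theorem_proj} have been designed precisely to serve as Borel reductions from the universal Borel-complete class, so the only verification required is that their targets land in $\mathbf{K}_1$ and $\mathbf{K}_2$ respectively, which is immediate from the stated properties. The ``hard part'', if any, is psychological: making sure the codings used to turn planes and graphs into elements of the Polish spaces $2^{\omega^{<\omega}}$ are compatible enough that the Borelness of item~(6) in each theorem is genuinely a Borelness statement about the isomorphism relations on $\mathbf{K}_1$ and $\mathbf{K}_2$, but this is a routine bookkeeping point.
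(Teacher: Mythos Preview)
Your proposal is correct and matches the paper's own approach: the paper simply states that Corollary~\ref{main_cor2} is a ``standard consequence'' of Theorems~\ref{main_theorem} and~\ref{main_theorem_proj}, citing \cite{friedman}, \cite{gao}, and \cite{ulm_inv_paper} for the background on Borel completeness and Ulm invariants. You have essentially written out the details the paper leaves implicit.
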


	In \cite{frucht} and \cite{frucht2} Frucht showed that every finite group is the group of automorphisms of a finite graph. Later, Sabadussi \cite{sabi} and, independently, de Groot \cite{groot} proved that every group is the group of automorphisms of a graph. Using this, Harary, Piff, and Welsh \cite{piff} proved that every group is the group of automorphisms of a graphic matroid, possibly of infinite rank. In \cite{bonin}, Bonin and Kung showed that every infinite group is the group of automorphisms of a Dowling plane of the same cardinality. In \cite{projective}, Mendelsohn proved that every group is the group of collineations of some projective plane. Using Theorems \ref{main_theorem} and \ref{main_theorem_proj} we rediscover and improve these results:

	\begin{corollary}\label{main_cor} 
	\begin{enumerate}[(1)]
	\item For every finite structure $M$ (in the sense of model theory) there exists a simple plane $P_M$ such that $P_M$ is finite and $Aut(P_M) \cong Aut(M)$.
	\item For every infinite structure $M$ (in the sense of model theory) there exists a simple plane $P_M$ such that $|M| = |P_M|$ and $Aut(P_M) \cong Aut(M)$.
	\item For every infinite structure $M$ there exists a non-Desarguesian projective plane $P_M$ such that $|M| = |P_M|$ and $Aut(P_M) \cong Aut(M)$.
\end{enumerate}	
\end{corollary}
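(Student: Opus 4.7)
The plan is to reduce Corollary \ref{main_cor} to Theorems \ref{main_theorem} and \ref{main_theorem_proj} by first replacing an arbitrary structure $M$ with a graph of the ``same'' size and automorphism group. Concretely, I would invoke the classical coding result that for every structure $M$ in a relational language of size at most $|M|$ there exists a graph $\Gamma_M$ with $Aut(\Gamma_M) \cong Aut(M)$ as abstract groups, with $|\Gamma_M| = |M|$ when $M$ is infinite and $\Gamma_M$ finite when $M$ is finite. For the group case this is Frucht \cite{frucht} in the finite regime and Sabadussi \cite{sabi} / de Groot \cite{groot} in the infinite regime; the generalisation to arbitrary structures is standard and is obtained by attaching a rigid asymmetric gadget to each tuple of each relation (and encoding function symbols through their graphs), which identifies $Aut(M)$ with $Aut(\Gamma_M)$ without inflating cardinality.

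Granting this coding step, the three clauses of the corollary follow by direct composition with the planes produced by Theorems \ref{main_theorem} and \ref{main_theorem_proj}. For clause (1), take a finite structure $M$, choose a finite graph $\Gamma_M$ as above, and set $P_M := P_{\Gamma_M}$. By clause (1) of Theorem \ref{main_theorem}, $|P_M| = 3|V(\Gamma_M)| + |E(\Gamma_M)| + 17$ is finite; by clause (3), $P_M$ is simple in the sense of Definition \ref{def_classes}(1); and by clause (\ref{auto}), $Aut(P_M) \cong Aut(\Gamma_M) \cong Aut(M)$. For clause (2), the same construction applied to an infinite $M$ yields $|P_M| = |\Gamma_M| = |M|$ by clause (2) of Theorem \ref{main_theorem}. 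For clause (3), take the same $\Gamma_M$ but feed it into Theorem \ref{main_theorem_proj} to obtain $P_M := P^*_{\Gamma_M}$, a non-Desarguesian projective plane of cardinality $|M|$ whose automorphism group is again $Aut(M)$.

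There is no substantive obstacle in the argument: the hard work is entirely encapsulated in Theorems \ref{main_theorem} and \ref{main_theorem_proj}. The only point demanding care is the coding step, but since we need only equality of abstract automorphism groups (not of permutation representations) and no control beyond the cardinal equality $|\Gamma_M| = |M|$, any of the standard encodings in the literature suffices, and I would simply cite one.
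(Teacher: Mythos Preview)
Your proposal is correct and matches the paper's own argument essentially verbatim: the paper likewise reduces the corollary to Theorems \ref{main_theorem} and \ref{main_theorem_proj} via the stated fact that every (finite, resp.\ infinite) structure $M$ admits a (finite, resp.\ same-cardinality) graph $\Gamma_M$ with $Aut(\Gamma_M)\cong Aut(M)$. The only difference is cosmetic---you sketch the gadget construction behind the coding step, whereas the paper simply records it as a black-box fact.
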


	Finally, we use classical results of projective geometry to prove:
	
		\begin{theorem}\label{des_theorem} Let $\mathbf{K}_3$ be the class of countable Pappian\footnote{Notice that Pappian planes are Desarguesian.} projective planes. Then:
\begin{enumerate}[(1)]
	\item $\mathbf{K}_3$ is Borel complete;
	\item $\mathbf{K}_3$ does not admit a Ulm type classification.
\end{enumerate}
\end{theorem}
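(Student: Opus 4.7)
The plan is to exploit the classical coordinatization of Pappian projective planes by commutative fields in order to Borel-reduce the isomorphism relation on countable fields to that on $\mathbf{K}_3$, and then to invoke the Borel completeness of the former.

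First, I would invoke the classical coordinatization theorem of Hilbert and Hessenberg: a projective plane is Pappian if and only if it is isomorphic to $PG(2, F)$ for some commutative field $F$, where $PG(2, F)$ denotes the projective plane whose points are the $1$-dimensional $F$-subspaces of $F^3$, whose lines are the $2$-dimensional $F$-subspaces, and whose incidence is set-theoretic inclusion. Combined with the coordinatizable case of the fundamental theorem of projective geometry --- every collineation lifts to a semi-linear isomorphism of the underlying vector spaces, and such a lift determines (and is determined by, modulo a projective linear map) an isomorphism of the coordinatizing fields --- this yields the crucial isomorphism criterion
\[
PG(2, F_1) \cong PG(2, F_2) \quad \iff \quad F_1 \cong F_2.
\]

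Second, I would realize $F \mapsto PG(2, F)$ as a Borel map on structures with domain $\omega$. A Borel selector for the scalar-multiplication equivalence relation on $F^3 \setminus \{0\}$ (for instance, normalizing the first non-zero coordinate to $1$) gives a Borel enumeration of the points of $PG(2, F)$ inside $\omega$, and the analogous construction enumerates the lines. Incidence is Borel since it reduces to the vanishing of $au + bv + cw$ in $F$.

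Third, by the classical result of Friedman and Stanley that the class of countable commutative fields is Borel complete, composition yields a Borel reduction of an $Sym(\omega)$-complete isomorphism relation into $\mathbf{K}_3$, proving clause (1). Clause (2) then follows automatically, since Borel completeness excludes the existence of a Ulm-type system of invariants (cf. \cite{ulm_inv_paper}). The main obstacle is not conceptual but bibliographic, namely locating the coordinatization theorem in a form that immediately yields the above isomorphism criterion and invoking Friedman--Stanley's Borel completeness of fields; given both, the proof is essentially a translation, the only genuine technicality being the routine verification that the coding is Borel.
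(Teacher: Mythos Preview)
Your proposal is correct and follows essentially the same route as the paper: reduce countable fields to $\mathbf{K}_3$ via $F \mapsto PG(2,F)$, use that this map lands in Pappian planes and that $PG(2,F_1)\cong PG(2,F_2)$ iff $F_1\cong F_2$, then invoke Friedman--Stanley's Borel completeness of countable fields. You in fact supply more than the paper does --- the full coordinatization (only the forward direction is needed) and an explicit verification of Borelness of the coding --- while the paper simply cites the three facts and declares the result immediate.
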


We leave the following open problem:
	
	\begin{oproblem} Characterize the Lenz-Barlotti classes of countable projective planes which are Borel complete.
\end{oproblem}

	
\section{Preliminaries}\label{preliminaries}
	
	Given a plane $P$ we will freely refer to the canonically associated geometric lattice $G(P)$. On this see e.g. \cite{rota}, or \cite[Section 2]{paolini&hyttinen}, for an introduction directed to logicians. For our purposes the \mbox{lattice-theoretic definitions in Definition \ref{basic_def}(\ref{sup}-\ref{inf}) suffice.}
	
	\begin{definition}\label{basic_def} Let $P$ be a plane.
	\begin{enumerate}[(1)]
	\item\label{sup} Given two distinct points $a_1$ and $a_2$ of $P$ we let $a_1 \vee a_2$ be the unique line that they determine.
	\item\label{inf} Given two distinct lines $\ell_1$ and $\ell_2$ of $P$ we let $\ell_1 \wedge \ell_2$ be the unique point in their intersection, if such a point exists, and $0$ otherwise.
	\item The {\em size} $|P|$ of a plane $P$ is the size of its set of points. 
	\item We say that the point $a$ (resp. the line $\ell$) is {\em incident} with the line $\ell$ (resp. the point $a$) if the point $a$ (resp. the line $\ell$) is contained in the line $\ell$ (resp. contains the point $a$).
	\item\label{trivial} We say that the line $\ell$ from $P$ is {\em trivial} if $\ell$ is incident with exactly two points from $P$.
	\item We say that two lines $\ell_1$ and $\ell_2$ from $P$ are {\em parallel} in $P$ if $\ell_1 \wedge \ell_2 = 0$, i.e. there is no point $p \in P$ incident with both $\ell_1$ and $\ell_2$. 
	\item We say that three distinct points $a_1, a_2, a_3$ of $P$ are {\em collinear} if there is a line $\ell$ in $P$ such that $a_i$ is incident with $\ell$ for every $i = 1, 2, 3$ (in this case we also say that the set $\{a_1, a_2, a_3\}$ is dependent).
\end{enumerate}
\end{definition}
	
	We will use crucially the following fact from the theory of one-point extensions of matroids from \cite{crapo} (see also \cite[Chapter 10]{rota} and \cite[Theorem 2.12]{paolini&hyttinen}).

	\begin{fact}\label{fact} Let $P$ be a plane, $L$ a set of parallel lines of $P$ (in particular $L$ can be empty or a singleton) and $p \not\in P$. Then there exists a plane $P(L)$ (unique modulo isomorphism) such that its set of points is the set of points of $P$ plus the point $p$, and $p, q, r$ are collinear in $P(L)$ if and only if $q \vee r \in L$.
\end{fact}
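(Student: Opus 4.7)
The plan is to build $P(L)$ by hand and then verify the four axioms of Definition \ref{def_plane}. Take the point set to be $P \cup \{p\}$ and declare the lines of $P(L)$ to belong to three disjoint families: (a) every line $\ell$ of $P$ with $\ell \notin L$, kept exactly as a subset of $P$; (b) for each $\ell \in L$, the enlarged set $\ell \cup \{p\}$; and (c) for each $q \in P$ that is incident with no line in $L$, the trivial new line $\{p,q\}$. The collinearity clause of the statement is then immediate from the definition: a triple through $p$ is collinear in $P(L)$ exactly when it is contained in a line of type (b), i.e.\ when $q \vee r \in L$. Lines of type (a) cannot witness such a collinearity (they miss $p$), and lines of type (c) contain only one point besides $p$.

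First I would verify axiom (A). Two distinct points of $P$ are joined by the same line as in $P$ (which, if it happens to lie in $L$, has simply been enlarged, so it still realizes the join). The point $p$ is joined to $q \in P$ by $\ell \cup \{p\}$ if some $\ell \in L$ contains $q$, and by $\{p,q\}$ otherwise; uniqueness in the first case uses that lines of $L$ are pairwise parallel, so no point of $P$ can lie on two lines of $L$. Axiom (B) is the only place where the parallelism hypothesis really bites: two lines of type (b) come from lines of $L$ that are parallel in $P$, so their underlying $P$-points are disjoint, and $p$ is therefore their unique common point; without parallelism, a shared point in $P$ would give two intersections in $P(L)$. The remaining line-type pairs (a)-(a), (a)-(b), (a)-(c), (b)-(c), (c)-(c) are handled by inspection using the axioms of $P$ and the fact that the $P$-point of a type-(c) line is not incident with any line of $L$. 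Axiom (C) is immediate from the construction, and (D) is inherited from $P$.

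For uniqueness up to isomorphism, let $P'$ be any plane on the point set $P \cup \{p\}$ whose restriction to $P$ is $P$ and which satisfies the prescribed collinearity condition. The lines of $P'$ not through $p$ are determined point-wise by pairs of points in $P$ and must agree with the corresponding lines of $P$, since otherwise the induced structure on $P$ would differ from $P$. The lines of $P'$ through $p$ are similarly forced by the collinearity condition: $p \vee q$ in $P'$ must equal $\ell \cup \{p\}$ if $q$ lies on some (unique) $\ell \in L$, and must equal $\{p,q\}$ otherwise. Hence $P'$ coincides with $P(L)$ on the nose; in particular the identity map on points is an isomorphism.

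The only real obstacle is keeping the case analysis in (B) tidy and invoking parallelism of $L$ exactly at the (b)-(b) step; everything else is bookkeeping, and one could alternatively short-circuit the whole verification by citing Crapo's description of single-point extensions via modular cuts and observing that a set of pairwise parallel lines of a rank-$3$ geometric lattice is automatically a modular cut.
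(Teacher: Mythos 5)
Your construction and verification are correct, but note that the paper does not prove this statement at all: it is stated as a \emph{Fact} and discharged by citation to Crapo's theory of single-element extensions of matroids via modular cuts (\cite{crapo}, see also \cite[Chapter 10]{rota} and \cite[Theorem 2.12]{paolini&hyttinen}). What you do instead is a direct, self-contained verification of axioms (A)--(D) of Definition \ref{def_plane} for an explicitly described incidence structure, and the details check out: the three line families are pairwise disjoint as point sets (since $p \notin P$ and type-(c) lines are only introduced at points missed by every line of $L$), pairwise parallelism of $L$ is invoked exactly where it is indispensable --- uniqueness of $p \vee q$ when $q$ lies on a line of $L$, and the (b)-(b) case of axiom (B) --- and the uniqueness argument correctly shows that the collinearity condition forces every line through $p$ to be either $\ell \cup \{p\}$ for the unique $\ell \in L$ through $q$ or the doubleton $\{p,q\}$. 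Your closing remark is the bridge between the two approaches: in a rank-$3$ geometric lattice two parallel lines never form a modular pair (as $2+2 \neq 3+0$), so the filter generated by a pairwise-parallel family of lines is vacuously a modular cut, which is precisely why Crapo's theorem applies. The elementary route buys self-containedness and makes visible where parallelism is used; the citation route buys brevity and generality (it works in any rank). Either is a legitimate proof of the Fact.
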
	

	We now introduce Hall's notion of free projective extension from \cite{hall_proj}. In exposition and results we follow \cite[Chapter XI]{piper}.

	\begin{definition}[Cf. {\cite[Theorem 11.4]{piper}}]\label{def_free_ext} Given a plane $P$ we define by induction on $n < \omega$ a chain of planes $(P_n : n < \omega)$ as follows:
\newline $n = 0$. Let $P_n = P$.
\newline $n = m+1$. For every pair of parallel lines $\ell \neq \ell'$ in $P_m$ add a new point $\ell \wedge \ell'$ to $P_m$ incident \mbox{with only $\ell$ and $\ell'$. Let $P_n$ be the resulting plane.}
\newline We define the {\em free projective extension} of $P$ to be $F(P) : = \bigcup_{n < \omega} P_n$.
\end{definition}

	\begin{definition} Given two planes $P_1$ and $P_2$, we say that $P_1$ is a {\em subplane} of $P_2$ if $P_1 \subseteq P_2$, points of $P_1$ are points of $P_2$, lines of $P_1$ are lines of $P_2$, and the point $p$ is on the line $\ell$ in $P_1$ if and only if the point $p$ is on the line $\ell$ in $P_2$.
\end{definition}

	\begin{definition}\label{def_conf} Let $P$ be a plane.
	\begin{enumerate}[(1)]
	\item If $P$ is {\em finite}, then we say that $P$ is {\em confined} if every point of $P$ is incident with at least three lines of $P$, and every line of $P$ is non-trivial (cf. Definition~\ref{basic_def}(\ref{trivial})).
	\item We say that $P$ is confined if every point and every line of $P$ is contained in a finite confined subplane of $P$.
\end{enumerate}	
\end{definition}

	We will make a crucial use of the following facts:
	
	\begin{definition}[{\cite[Definition 5.1.1]{steven}}]\label{desarg_def} Let $P$ be a projective plane. We say that $P$ is {\em Desarguesian} if given two triples of distinct points $p, q, r$ and $p', q', r'$, if the lines $p\vee p'$, $q \vee q'$ and $r \vee r'$ are incident with a common point, then the points $(p \vee q) \wedge (p' \vee q')$, $(p \vee r) \wedge (p' \vee r')$ and $(q \vee r) \wedge (q' \vee r')$ are collinear.
\end{definition}
	
	\begin{fact}[{\cite[Theorem 4.6]{hall_proj}}]\label{desargue_fact} Let $P$ be a plane which is not a projective plane. Then $F(P)$ is non-Desarguesian.
\end{fact}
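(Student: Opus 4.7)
The approach is to assign to each element of $F(P)$ a natural-number ``level'' recording when it first appears in the construction, prove that any finite confined subconfiguration of $F(P)$ must already sit inside $P$, and then exhibit two triangles in perspective in $F(P)$ whose associated Desargues configuration uses a point outside $P$. Concretely, for each point or line $x \in F(P)$ let $\lambda(x)$ be the least $n$ with $x \in P_n$. From the construction one immediately sees that a point $p$ with $\lambda(p) = n \geq 1$ lies on exactly two lines of level $< n$ --- namely the two lines $\ell, \ell'$ of $P_{n-1}$ whose intersection $p$ realizes at stage $n$ --- and every other line of $F(P)$ through $p$ has the form $p \vee q$ and level $\geq n$. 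The dual statement for lines of positive level holds as well.

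Using these two facts I would prove the Confined Configuration Lemma: if $C$ is a finite set of points and lines of $F(P)$ such that every point of $C$ is on at least three lines of $C$ and every line of $C$ contains at least three points of $C$, then $C \subseteq P$. The proof is by induction on $m := \max\{\lambda(x) : x \in C\}$: if $m \geq 1$, pick a point $p \in C$ with $\lambda(p) = m$; at most two lines of $C$ through $p$ have level $< m$, so some $\ell^* \in C$ through $p$ has level exactly $m$, and a careful analysis of the level-$m$ part of $C$ (using the dual statement for lines) shows that the confinement condition on both sides cannot be simultaneously satisfied within a finite configuration unless $m = 0$. In particular, since the Desargues configuration --- two triangles in perspective, their perspectivity center, their axis, and the three axis intersection points --- is confined (ten points and ten lines, each point on exactly three of the lines, each line containing exactly three of the points), any Desargues configuration realized in $F(P)$ must sit entirely inside $P$.

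Finally, since $P$ is not projective, there exist parallel lines $\ell \neq \ell'$ in $P$, so the point $q := \ell \wedge \ell'$ belongs to $P_1 \setminus P$. I would then construct two triangles in $F(P)$ in perspective from a suitable center $O$ such that $q$ appears as one of the three axis intersection points of the resulting Desargues configuration. If $F(P)$ were Desarguesian, the three axis points would be collinear, yielding a bona fide Desargues configuration in $F(P)$; by the Confined Configuration Lemma this configuration would lie inside $P$, contradicting $q \notin P$. Hence $F(P)$ is non-Desarguesian. The main obstacle is the Confined Configuration Lemma itself: the level induction is easy to state but requires a genuine combinatorial argument to convert the weak fact ``a point of level $m$ has at least one line of $C$ of level $m$ through it'' into a contradiction with the finiteness of $C$, using simultaneously the confinement of points and of lines at the top level.
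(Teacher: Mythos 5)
The paper offers no proof of this Fact---it is quoted verbatim from Hall (and reproved in Hughes--Piper, Theorem 11.6/11.7)---so there is nothing internal to compare against; but your reconstruction is essentially the classical argument, and it is correct. Two remarks. First, your Confined Configuration Lemma is right, but the ``genuine combinatorial argument'' you worry about is actually immediate and needs no induction: take any element of $C$ of maximal level $m\geq 1$. If a \emph{line} $\ell\in C$ has $\lambda(\ell)=m$, then $\ell$ carries exactly two points of level $\leq m$ (it is born at stage $m$ as the join of two points, and no three points introduced at the same stage are collinear by Fact \ref{fact}), while all points of $C$ have level $\leq m$, so $\ell$ meets $C$ in at most two points---contradicting confinement. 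If instead only points of $C$ attain level $m$, a point $p\in C$ with $\lambda(p)=m$ lies on exactly two lines of level $<m$, and every line of $C$ has level $<m$, so $p$ lies on at most two lines of $C$---again a contradiction. (Note the slight asymmetry: a new point lies on exactly two lines of strictly smaller level, whereas a new line contains exactly two points of level $\leq$ its own; your phrase ``the dual statement'' should be read with this care.) Second, the only place where real work remains in your sketch is the non-degeneracy of the Desargues configuration built around $q=\ell\wedge\ell'$: you must choose $A,B\in\ell$, $A',B'\in\ell'$, set $O=(A\vee A')\wedge(B\vee B')$, and then pick the third line through $O$ and the points $C,C'$ on it so that all ten points and ten lines of the resulting configuration are distinct (otherwise it need not be confined). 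Since $F(P)$ is an infinite projective plane in which every line eventually acquires infinitely many points, such generic choices are always available, but this check should be stated explicitly rather than left implicit in ``a suitable center $O$.'' With those two points addressed, your proof is complete and coincides with the standard one.
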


	\begin{fact}[{\cite[Theorem 11.11]{piper}}]\label{piper_fact1} Le $P_1$ and $P_2$ be confined planes. Then the following are equivalent:
	\begin{enumerate}[(1)]
	\item $F(P_1) \cong F(P_2)$;
	\item $P_1 \cong P_2$.
\end{enumerate}
\end{fact}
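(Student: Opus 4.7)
The plan is to exhibit the original confined plane $P$ as a canonically definable substructure of $F(P)$, namely the incidence structure induced on the set of points of $F(P)$ that are incident with at least three lines.

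For the easy direction $(2) \Rightarrow (1)$ I would lift any isomorphism $\varphi_0 \colon P_1 \to P_2$ stage by stage through Definition \ref{def_free_ext}: given $\varphi_n \colon P_{1,n} \to P_{2,n}$, parallelism is preserved, and so one can set $\varphi_{n+1}(\ell \wedge \ell') := \varphi_n(\ell) \wedge \varphi_n(\ell')$ on the newly added intersection points, with $\bigcup_n \varphi_n$ yielding an isomorphism $F(P_1) \to F(P_2)$.

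The harder direction $(1) \Rightarrow (2)$ rests on the following point-degree dichotomy inside $F(P)$: \emph{(a) every $p \in P$ is incident with at least three lines of $F(P)$, while (b) every $p \in F(P) \setminus P$ is incident with exactly two lines of $F(P)$.} Part (a) is precisely where confinement is used: any such $p$ lies in a finite confined subplane $Q \subseteq P$, in which $p$ meets at least three lines by Definition \ref{def_conf}, and these lines of $Q$ remain lines of $F(P)$. Part (b) exploits the defining feature of Definition \ref{def_free_ext}, namely that no stage of the construction ever adds a new line; each new point $\ell \wedge \ell'$ is stipulated to be incident only with $\ell$ and $\ell'$, and no later stage can enlarge its set of incident lines. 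Granting (a)--(b), the subset $P \subseteq F(P)$ is canonically determined as the set of points of degree $\geq 3$; and since every line of $P$ retains at least three $P$-points inside $F(P)$ (by the dual confinement argument applied to lines), the restriction of the incidence structure of $F(P)$ to this subset returns $P$ on the nose. Consequently, any isomorphism $\Phi \colon F(P_1) \to F(P_2)$ preserves point-degrees, sends $P_1$ onto $P_2$, and restricts to the sought isomorphism $P_1 \to P_2$.

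The subtle point I expect to require the most care is claim (b) at the \emph{limit} $F(P) = \bigcup_n P_n$: one must rule out that a point introduced as $\ell \wedge \ell'$ at stage $n+1$ acquires a third incident line at some later stage. This is however immediate from the construction, since at every inductive step one only adds new points (never new lines), and the appearance of new points on some line at a later stage does not create new incidences for previously existing points. Beyond this verification, the argument reduces to bookkeeping on the definitions of $F$ and of confinement.
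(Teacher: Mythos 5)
The paper offers no proof of this fact; it is imported verbatim from \cite[Theorem 11.11]{piper}, so your argument has to be measured against the standard one. Your direction $(2)\Rightarrow(1)$ is fine. The problem is that claim (b), on which your whole proof of $(1)\Rightarrow(2)$ rests, is false. The assertion that ``no stage of the construction ever adds a new line'' misreads Definition \ref{def_free_ext}: each $P_{n}$ must again be a \emph{plane}, and by Definition \ref{def_plane}(A) every pair of distinct points determines a line, so adjoining a new point $p=\ell\wedge\ell'$ forces the adjunction of all the trivial lines $p\vee q$ for $q$ not on $\ell$ or $\ell'$ (this is exactly what the one-point extension of Fact \ref{fact} does). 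At later stages these trivial lines are parallel to many others and acquire new points, so they become non-trivial; in the limit $F(P)$ is an infinite projective plane, in which \emph{every} point --- whether or not it lies in $P$ --- is incident with infinitely many lines. Hence the set of points of degree at least three is all of $F(P)$, the dichotomy (a)/(b) collapses, and $P$ is not recoverable from $F(P)$ by any degree count.

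The correct repair, and the actual content of \cite[Chapter XI]{piper}, is to replace point degrees by finite confined configurations (Definition \ref{def_conf}): one proves that every finite confined subplane of $F(P)$ is already contained in $P$. The argument assigns to each point and line of $F(P)$ the stage at which it is generated and considers a last-generated element of a putative confined configuration $C$ not contained in $P$: if it is a point adjoined as $\ell\wedge\ell'$, it lies on only two lines generated no later than itself, contradicting that it must lie on at least three lines of $C$; if it is a line, it was trivial when created and carries only two points generated no later than itself, contradicting that it must carry at least three points of $C$. Since $P_1$ and $P_2$ are assumed confined, each $P_i$ is exactly the union of the finite confined subplanes of $F(P_i)$, a manifestly isomorphism-invariant subset, and $(1)\Rightarrow(2)$ follows. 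Your instinct to exhibit $P$ as a canonically definable substructure of $F(P)$ is exactly right; the definable invariant just has to be ``lies in a finite confined configuration'' rather than ``has degree at least three''.
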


	\begin{fact}[{\cite[Theorem 11.18]{piper}}]\label{piper_fact2} Let $P$ be a confined plane. Then:
	$$Aut(P) \cong Aut(F(P)).$$
\end{fact}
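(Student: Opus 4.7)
The goal is to build mutually inverse group homomorphisms between $Aut(P)$ and $Aut(F(P))$.

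For the map $\Phi : Aut(P) \to Aut(F(P))$, I would proceed inductively along the chain $P = P_0 \subseteq P_1 \subseteq \cdots$ from Definition \ref{def_free_ext}. Given $\sigma \in Aut(P)$, suppose we have extended it to $\sigma_n \in Aut(P_n)$. Since $\sigma_n$ permutes lines and preserves parallelism, it induces a permutation on the unordered pairs of parallel lines of $P_n$, and these pairs canonically index the points of $P_{n+1} \setminus P_n$. Defining $\sigma_{n+1}(\ell \wedge \ell') := \sigma_n(\ell) \wedge \sigma_n(\ell')$ extends $\sigma_n$; the incidence relations transfer by Fact \ref{fact}. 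Setting $\Phi(\sigma) = \bigcup_n \sigma_n$ yields an automorphism of $F(P)$.

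For the inverse $\Psi : Aut(F(P)) \to Aut(P)$, the heart of the matter is the intrinsic claim that $P$ coincides with the union of all finite confined subplanes of $F(P)$. One containment is immediate from the confinement hypothesis on $P$ (Definition \ref{def_conf}). For the other, one shows that no point or line in $F(P) \setminus P$ lies inside any finite confined subplane of $F(P)$. A point $q \in P_{n+1} \setminus P_n$ is, by construction, incident with only two lines of $P_n$, namely $\ell, \ell'$; every additional line through $q$ in $F(P)$ is a trivial line created at a later stage joining $q$ to another newly added point. A finite confined subplane containing $q$ would demand at least three non-trivial lines through $q$, forcing the inclusion of further new points added at strictly later stages, and iterating this pressure (descending in stage-index on each required incidence) produces an infinite tree-like expansion, contradicting finiteness. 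Hence every $\tau \in Aut(F(P))$ fixes $P$ setwise and $\Psi(\tau) := \tau \upharpoonright P$ is well defined.

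Both $\Phi$ and $\Psi$ are manifestly group homomorphisms. The identity $\Psi \circ \Phi = \mathrm{id}_{Aut(P)}$ is built into the construction. Conversely, $\Phi \circ \Psi = \mathrm{id}_{Aut(F(P))}$ follows by induction on the stage: if $\tau \in Aut(F(P))$ restricts to $\sigma$ on $P$, then its action on the canonical parametrization of $P_{n+1} \setminus P_n$ by parallel-pairs of lines of $P_n$ is forced to send the point indexed by $\{\ell,\ell'\}$ to the one indexed by $\{\tau(\ell),\tau(\ell')\}$, which is exactly the prescription defining $\Phi(\sigma)$. The main obstacle is the intrinsic characterization of $P$ inside $F(P)$ via finite confined subplanes; everything else is a mechanical unwinding of Fact \ref{fact} and Definition \ref{def_free_ext}.
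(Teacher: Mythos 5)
This statement is quoted in the paper as a known result (\cite[Theorem 11.18]{piper}) and is given no proof there, so there is nothing internal to compare against; measured against the standard argument in Hughes--Piper, your proposal is essentially that argument and is correct. The two pillars are exactly right: the canonical stage-by-stage extension of an automorphism of $P$ along the free construction, and the intrinsic characterization of $P$ inside $F(P)$ as the union of all finite confined subconfigurations, proved by a birth-order argument (each element of $F(P)\setminus P$ is incident at birth with exactly two earlier elements, so a finite confined configuration meeting $F(P)\setminus P$ would need elements of unboundedly \emph{later} birth stage --- note your parenthetical ``descending in stage-index'' should read \emph{ascending}, and the phrase ``every additional line through $q$ in $F(P)$ is a trivial line'' is literally false in the projective plane $F(P)$, though what you actually use, namely that such lines are \emph{born} later and born with only two points, is correct). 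These are cosmetic slips, not gaps.
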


	The following facts are classical results of projective geometry.
	
	\begin{definition}[{\cite[Definition 6.1.1]{steven}}]\label{pappian_def} Let $P$ be a projective plane. We say that $P$ is {\em Pappian} if given two triples of distinct collinear points $p, q, r$ and $p', q', r'$ on distinct lines $\ell$ and $\ell'$, respectively, if $\ell \wedge \ell'$ is different from all six points, then the points $(p \vee q') \wedge (p' \vee q)$, $(p \vee r') \wedge (p' \vee r)$ and $(q \vee r') \wedge (q' \vee r)$ are collinear.
\end{definition}

	\begin{definition} Given a field $K$ we denote by $\mathfrak{P}(K)$ the corresponding projective plane (cf. e.g. \cite[Section 2]{piper}).
\end{definition}

	\begin{fact}[{\cite[Theorem 2.6]{piper}}]\label{pappian_field_fact} Let $K$ be a field. Then $\mathfrak{P}(K)$ is Pappian.
\end{fact}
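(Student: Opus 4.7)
The plan is to prove Pappus's theorem directly in coordinates, exploiting the fact that the projective plane $\mathfrak{P}(K)$ is built from the vector space $K^3$: points are one-dimensional subspaces $[x:y:z]$, lines are two-dimensional subspaces given by homogeneous linear equations, and incidence is containment. In this model, the join $a_1 \vee a_2$ of two points is computed by the vanishing of the $3 \times 3$ determinant with rows $[x:y:z]$, $a_1$, $a_2$, and three points are collinear iff the determinant of their (chosen) homogeneous coordinate vectors is zero. The group $PGL_3(K)$ acts on $\mathfrak{P}(K)$ and is sharply transitive on projective frames (i.e.\ ordered quadruples of points in general position), so in particular we have enough homographies to normalize any Pappus configuration.

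First I would reduce to a normal form using a projective change of coordinates. Given two distinct lines $\ell \neq \ell'$ with $o := \ell \wedge \ell'$ distinct from $p,q,r,p',q',r'$, I send $o$ to $[0:0:1]$, $\ell$ to the line $\{y = 0\}$, and $\ell'$ to the line $\{x = 0\}$ by an element of $PGL_3(K)$. After this normalization, each $p,q,r$ on $\ell$ has the form $[a_i:0:1]$ and each $p',q',r'$ on $\ell'$ has the form $[0:b_j:1]$, where $a_i,b_j \in K^{\times}$ because the six points differ from $o$.

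Next I would compute, via the cross-product formula for joins and meets, explicit homogeneous coordinates for the three Pappus intersections
\[
X=(p \vee q') \wedge (p' \vee q), \quad Y=(p \vee r') \wedge (p' \vee r), \quad Z=(q \vee r') \wedge (q' \vee r).
\]
The line $p \vee q'$ has equation $b_2 x + a_1 y - a_1 b_2 z = 0$ and $p' \vee q$ has equation $b_1 x + a_2 y - a_2 b_1 z = 0$, and similarly for the other two pairs; solving each $2 \times 2$ system yields coordinates for $X, Y, Z$ that are rational expressions in the $a_i, b_j$. Finally I would form the $3 \times 3$ matrix whose rows are these coordinate vectors and verify, after clearing denominators, that its determinant equals zero, which by the collinearity criterion gives Pappus.

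The hard part of the argument, and the essential feature distinguishing Pappus from Desargues, is precisely this last determinant cancellation: when one expands the determinant and collects terms, every monomial that appears does so paired with another monomial obtained from it by swapping two factors drawn from $\{a_i\} \cup \{b_j\}$, and the pair cancels \emph{only because $K$ is commutative}. (Over a noncommutative division ring $D$, the same computation produces nonvanishing commutators, which is why $\mathfrak{P}(D)$ is Desarguesian but not in general Pappian.) Once this cancellation is observed, the Pappian axiom in the form of Definition \ref{pappian_def} is verified, completing the proof.
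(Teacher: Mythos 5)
The paper contains no proof of this statement at all: it is imported as a black box, with a citation to \cite[Theorem 2.6]{piper}, so there is nothing internal to compare your argument against. Judged on its own terms, your proposal is the standard analytic proof of Pappus's theorem and its architecture is sound; the line equations you write down (e.g.\ $b_2x+a_1y-a_1b_2z=0$ for $p\vee q'$) are correct, the cross-product coordinates for $X,Y,Z$ that they yield do satisfy the required $3\times 3$ determinant identity, and your emphasis on commutativity as the point where the cancellation happens is exactly right.

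There is, however, one concrete defect in the normalization step. After sending $o=\ell\wedge\ell'$ to $[0:0:1]$, $\ell$ to $\{y=0\}$ and $\ell'$ to $\{x=0\}$, it is \emph{not} true that every point of $\ell$ other than $o$ has the form $[a:0:1]$ with $a\in K^{\times}$: the point $[1:0:0]$ lies on $\{y=0\}$, is distinct from $o$, and is not of this form (similarly $[0:1:0]$ on $\ell'$). So the parametrization as written silently excludes configurations in which one of the six points lands ``at infinity'', and for small fields one cannot always dodge this by rechoosing the line $z=0$ (over $\mathbb{F}_3$ a line has only four points, so $\ell$ may be exactly $\{o,p,q,r\}$). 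The clean repair uses the sharp transitivity on frames that you already invoke but do not fully exploit: $p,q,p',q'$ are in general position (no three collinear, since none of them equals $o$ and neither line contains a point of the other besides $o$), so an element of $PGL_3(K)$ sends them to $[1{:}0{:}0]$, $[0{:}1{:}0]$, $[0{:}0{:}1]$, $[1{:}1{:}1]$; then $\ell=\{z=0\}$, $\ell'=\{x=y\}$, $o=[1{:}1{:}0]$, and the remaining points are forced to be $r=[1{:}c{:}0]$ and $r'=[1{:}1{:}d]$ with $c,d\neq 0,1$, leaving a two-parameter computation that covers every case. With that adjustment the proof is complete.
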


	\begin{fact}[{\cite[Theorem 2.8]{piper}}]\label{pappian_fact} Let $K$ and $K'$ be fields. Then $\mathfrak{P}(K) \cong \mathfrak{P}(K')$ if and only if $K \cong K'$.
\end{fact}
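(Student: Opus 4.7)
The forward direction is functorial and routine. Given a field isomorphism $\sigma : K \to K'$, the coordinatewise action $[x_0:x_1:x_2] \mapsto [\sigma(x_0):\sigma(x_1):\sigma(x_2)]$ is well-defined on $(K^3 \setminus \{0\})/K^\times$ because $\sigma$ intertwines scalar multiplication; applying the same recipe to the coefficient vectors of linear forms $a_0X_0 + a_1X_1 + a_2X_2 = 0$ sends lines to lines and manifestly preserves the incidence relation. This yields an isomorphism $\mathfrak{P}(K) \to \mathfrak{P}(K')$.

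For the non-trivial direction, fix an isomorphism $f : \mathfrak{P}(K) \to \mathfrak{P}(K')$. The plan is to recover the field $K$ from $\mathfrak{P}(K)$ in a way that is invariant under collineations; then the same recovery applied to $\mathfrak{P}(K')$ gives $K'$, and $f$ transports one recovery to the other. Concretely, pick a projective frame $O, U, V, E$ (four points, no three collinear) in $\mathfrak{P}(K)$. On the affine line $\ell = O \vee U$ with the point $U$ deleted, define addition and multiplication geometrically via the standard von Staudt constructions: sums and products of two elements $a, b \in \ell \setminus \{U\}$ are produced by a prescribed finite sequence of joins and meets involving $O, V, E$ and auxiliary parallel lines through $V$. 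The output is a ternary ring whose operations are read off purely from the incidence structure of $\mathfrak{P}(K)$ together with the choice of frame.

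Now invoke the classical coordinatization theorem for Pappian projective planes (cf.\ \cite[Chapter VI]{steven}): the Pappus axiom (which holds in $\mathfrak{P}(K)$ by Fact \ref{pappian_field_fact}) forces the geometric addition and multiplication to be associative and commutative, and the resulting structure on $\ell \setminus \{U\}$ is a field canonically isomorphic to $K$. Since $f$ is an incidence-preserving bijection, it sends the chosen frame in $\mathfrak{P}(K)$ to a projective frame in $\mathfrak{P}(K')$; performing the same geometric recipe there yields a field canonically isomorphic to $K'$. Because every instance of ``$\vee$'' and ``$\wedge$'' used to define the operations is preserved by $f$, the restriction of $f$ to the coordinate line induces a field isomorphism $K \to K'$.

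The main obstacle is Step 2/3, i.e.\ the content of the coordinatization theorem: verifying that the geometric sum and product on $\ell \setminus \{U\}$ are well-defined, that Pappus upgrades the general ternary-ring axioms to the full axioms of a commutative field, and that the resulting field is canonically isomorphic to the original $K$. Once this classical fact is granted, the conclusion is immediate because collineations trivially commute with any construction phrased purely in terms of joins and meets of incident points and lines.
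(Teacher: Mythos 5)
The paper does not prove this statement at all: it is imported as a black-box \emph{Fact} with a citation to Hughes--Piper, Theorem 2.8, and is used only at the very end to deduce Theorem \ref{des_theorem}. Your argument is the standard proof that underlies that citation --- the easy functorial direction plus von Staudt coordinatization for the converse --- and it is correct in outline, with the genuinely hard content (that the incidence-theoretically defined sum and product on a coordinate line form a field isomorphic to $K$, with Pappus supplying commutativity) correctly identified and delegated to the classical coordinatization theorem. The one step you gloss that deserves a sentence is frame-independence: the coordinatization theorem is usually stated for the standard frame $[1{:}0{:}0], [0{:}1{:}0], [0{:}0{:}1], [1{:}1{:}1]$, so to conclude that the field extracted from an \emph{arbitrary} frame (in particular, from the $f$-image of your chosen frame in $\mathfrak{P}(K')$) is still isomorphic to $K'$, you should invoke the transitivity of $\mathrm{PGL}_3(K')$ on ordered quadrangles in general position. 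That is classical and unproblematic, so there is no real gap; also, the phrase ``parallel lines through $V$'' should be read in the affine sense (lines concurrent at the deleted point $U$), since a projective plane has no parallels.
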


	Concerning the topological notions occurring in Theorem \ref{main_theorem}, they are in the sense of invariant descriptive set theory of $\mathfrak{L}_{\omega_1, \omega}$-classes, see e.g. \cite[Chapter 11]{gao_invariant} for a thorough introduction.  Notice that the classes of planes, simple planes, projective planes, (non-)Desarguesian projective planes (cf. Definition \ref{desarg_def}), and Pappian planes (cf. Definition \ref{pappian_def}) are first-order classes, considered e.g. in a language specifying points, lines and the point-line incidence relation. 

\section{Proof of Theorem \ref{main_theorem}}

	In this section we prove Theorem \ref{main_theorem}.

\begin{notation}\label{notation_Bonin_paper} We denote by $P_*$ the plane represented in Figure \ref{figure1}. The plane $P_*$ is taken from \cite{bonin}, where it is denoted as $T_S$ for $S = \{ 0, 1, 2, 3 \}$.
\begin{figure}[htb]
\begin{center}
\includegraphics[clip, trim=0.5cm 10.5cm 0.5cm 10.5cm, width=1.20\textwidth]{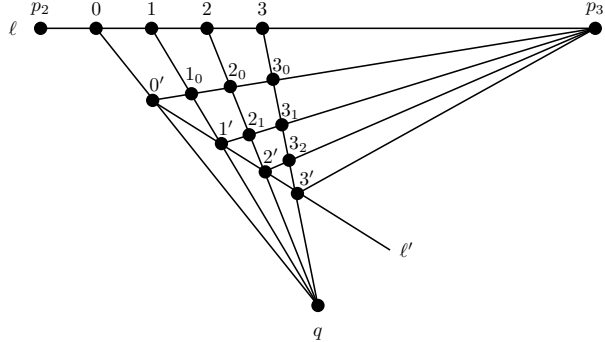}\caption{The plane $P_*$.}\label{figure1}
\end{center}
\end{figure}
\end{notation}
		
	\begin{proof}[Proof of Theorem \ref{main_theorem}] Let $\Gamma = (V, E)$ be given and let $\{ v_{\alpha} : \alpha < \lambda \}$ list $V$ without repetitions. For $\gamma \leq \lambda$, let $\Gamma_{\gamma} = (V_{\gamma}, E_{\gamma})$ be such that $V_{\gamma} = \{ v_{\beta} : \beta < \gamma \}$ and for ${\alpha} < \beta < \gamma$ we have $v_{\alpha} E_{\gamma} v_{\beta}$ if and only if $v_{\alpha} E v_{\beta}$. Let $P_*$ be the plane from Notation \ref{notation_Bonin_paper}. Notice that $|P_*| = 17$ and, as proved in \cite[Lemma 2]{bonin}, $P_*$ is rigid, i.e. $Aut(P_*) = \{ e \}$.
	
\smallskip
\noindent By induction on $\beta \leq \lambda$, we construct a plane $P_{\Gamma}(\beta)$ such that its set of points is:
	\begin{equation}\label{equation_points} \tag{$*$}
	 P_* \cup \{ p_{(\alpha, 0)} : \alpha < \beta \} \cup \{ p_{(\alpha, 1)} : \alpha < \beta \} \cup \{ p_{(\alpha, 2)} : \alpha < \beta \} \cup \{ p_e : e \in E_{\beta} \}.
\end{equation}
For $\beta = 0$, let $P_{\Gamma}(\beta) = P_*$. For $\beta$ limit ordinal, let $P_{\Gamma}(\beta) = \bigcup_{\alpha < \beta} P_{\Gamma}(\alpha)$. For $\beta = \alpha + 1$, we construct $P_{\Gamma}(\beta)$ from $P_{\Gamma}(\alpha)$ via a sequence of one-point extensions as follows. Firstly, add a new point $p_{(\alpha, 0)}$ under the line $p_2 \vee 1'$ (using Fact \ref{fact} with $L = \{p_2 \vee 1'\}$). Secondly, add a new point $p_{(\alpha, 1)}$ under the line $0 \vee 1'$ (using Fact \ref{fact} with $L = \{0 \vee 1'\}$). Thirdly, add a new point $p_{(\alpha, 2)}$ under the line $p_{(\alpha, 0)} \vee p_{(\alpha, 1)}$ (using Fact \ref{fact} with $L = \{ p_{(\alpha, 0)} \vee p_{(\alpha, 1)}\}$). Fourthly, for every $e = \{ v_{\delta}, v_{\alpha} \} \in E_{\beta}$ add a point $p_e$ under the parallel lines $p_{(\delta, 0)} \vee p_{(\delta, 1)}$ and $p_{(\alpha, 0)} \vee p_{(\alpha, 1)}$ (using Fact \ref{fact} with $L = \{ p_{(\delta, 0)} \vee p_{(\delta, 1)},  p_{(\alpha, 0)} \vee p_{(\alpha, 1)} \}$). \mbox{Let $P_{\Gamma}(\beta)$ be the resulting plane.}

\smallskip
\noindent Let $P_{\Gamma}(\lambda) = P_{\Gamma}$. First of all, by (\ref{equation_points}), the size of $P_{\Gamma}$ is clearly as wanted. Also, if $p \notin P_*$, then, by construction, $p$ is incident with at most two non-trivial lines. Furthermore, the construction of $P_{\Gamma}$ from $\Gamma$ is explicit, and so, restricted to structures with domain $\omega$, the map $\Gamma \mapsto P_{\Gamma}$ is easily seen to be Borel, since to know a finite substructure of $P_\Gamma$ it is enough to know a finite part of $\Gamma$.  Thus, we are only left to show items (\ref{auto}) and (\ref{iso_invariance}) of the statement of the theorem. To this extent, first of all notice that, letting $p_{(\alpha, 0)} \vee p_{(\alpha, 1)} = \ell_{\alpha}$ (for $\alpha < \lambda$), we have: 
\begin{enumerate}[$(\star_1)$]
\item the set of lines $\{ \ell_{\alpha} : \alpha < \lambda \}$ of $P_{\Gamma}$ with edge relation $\ell_\alpha E \ell_\beta$ if and only if $\ell_\alpha \wedge \ell_\beta \neq 0$ (i.e. the two lines intersect) is isomorphic to $\Gamma$.
\end{enumerate}
Now, for a point $p$ let $\varphi(p)$ be the following statement: 
\begin{enumerate}[$(S)$]
\item $p$ is incident with exactly four distinct non-trivial lines, or $p$ is incident with a non-trivial line $\ell$ which contains a point $p'$ which is incident with four distinct non-trivial lines.
\end{enumerate} 
Notice that for a point $p \in P_{\Gamma}$ we have:
\begin{enumerate}[$(\star_2)$]
\item $P_{\Gamma} \models \varphi(p)$ if and only if $p \in P_*$.
\end{enumerate}
In fact, if the point $p \in P_*$, then either it is the point $q$, in which case there are four distinct non-trivial lines which are incident with it, or we can find a non-trivial line $\ell$ which is incident with the point $p$ and contains the point $p_3$ (this is clear by inspection of Figure \ref{figure1}). On the other hand, if the point $p \not\in P_*$, then it is either $p_{(\alpha, 0)}$, $p_{(\alpha, 1)}$, $p_{(\alpha, 2)}$, or $p_e$, for some $\alpha < \lambda$ and $e \in E_{\Gamma}$. Notice now that: 
\begin{enumerate}[$(\star_3)$]
	\item if $p = p_{(\alpha, 0)}$, then $p$ is incident with exactly two non-trivial lines, namely the lines $p_2 \vee 1'$ and $p_{(\alpha, 0)} \vee p_{(\alpha, 1)}$;
\end{enumerate} 
\begin{enumerate}[$(\star_4)$]
	\item if $p = p_{(\alpha, 1)}$, then $p$ is incident with exactly two non-trivial lines, namely the lines $0 \vee 1'$ and $p_{(\alpha, 0)} \vee p_{(\alpha, 1)}$;
\end{enumerate}
\begin{enumerate}[$(\star_5)$]
	\item the point $p_2$ is incident with exactly two non-trivial lines, namely the line $p_2 \vee 0$ and the line $p_2 \vee 1'$; the point $0$ is incident with exactly three non-trivial lines, namely the lines $p_2 \vee 0$, $0 \vee 0'$ and $0 \vee 1'$; the point $1'$ is incident with exactly three non trivial lines, namely the lines $1' \vee 0'$, $1' \vee 1_0$ and $1' \vee 2_1$;
\end{enumerate} 
\begin{enumerate}[$(\star_6)$]
	\item if $p = p_{e}$ and $e = \{ v_{\delta}, v_{\alpha} \}$, then $p_{e}$ is incident with exactly two non-trivial lines, namely the lines $p_{(\delta, 0)} \vee p_{(\delta, 1)}$ and $p_{(\alpha, 0)} \vee p_{(\alpha, 1)}$;
\end{enumerate}
\begin{enumerate}[$(\star_7)$]
	\item for $\alpha < \lambda$, the set of points incident with the line $p_{(\alpha, 0)} \vee p_{(\alpha, 1)}$ is:
	$$\{ p_{(\alpha, 0)}, p_{(\alpha, 1)}, p_{(\alpha, 2)} \} \cup \{ p_{e} : p_{\alpha} \in e \in E_{\Gamma} \};$$
\end{enumerate} 
\begin{enumerate}[$(\star_8)$]
	\item if  $\alpha \neq \beta < \lambda$, then $p_{(\alpha, 0)} \vee p_{(\beta, 1)}$ is a trivial line.
\end{enumerate}      
Thus, by $(\star_3)$-$(\star_8)$, it is clear that for $p \notin P_*$ we have that $P_{\Gamma} \not\models \varphi(p)$.

\smallskip
\noindent We now prove (\ref{iso_invariance}). Let $f: P_{\Gamma_1} \cong P_{\Gamma_2}$, $|\Gamma_1| = \lambda$ and, for $i = 1, 2$, let the set of points of $P_{\Gamma_i}$ be:
$$\{ (p, i) : p \in P_* \} \cup \{ p^i_{(\alpha, j)} : j < 3, \alpha < \lambda \} \cup \{ p^i_e : e \in E_{\Gamma_i} \},$$
(cf. $(*)$ above). By $(\star)_2$, we have that $f$ restricted to
$\{ (p, 1) : p \in P_* \}$ is an isomorphism from $\{ (p, 1) : p \in P_* \}$ onto $\{ (p, 2) : p \in P_* \}$, and so, as $P_*$ is rigid, for every $p \in P_*$ we have that $f((p, 1)) = (p, 2)$. In particular, the line $(p_2, 1) \vee (1', 1)$ is mapped to the line $(p_2, 2) \vee (1', 2)$, and the line $(0, 1) \vee (1', 1)$ is mapped to the line $(0, 2) \vee (1', 2)$. Thus,  $f$ maps $\{ p^1_{(\alpha, 0)} : \alpha < \lambda \}$ onto $\{ p^2_{(\alpha, 0)} : \alpha < \lambda \}$ and $\{ p^1_{(\alpha, 1)} : \alpha < \lambda \}$ onto $\{ p^2_{(\alpha, 1)} : \alpha < \lambda \}$.
Also, by $(\star_6)$, $f$ maps $\{ p^1_{(\alpha, 2)} : \alpha < \lambda \}$ onto $\{ p^2_{(\alpha, 2)} : \alpha < \lambda \}$. Finally, if  $\alpha \neq \beta < \lambda$ and $f(p^1_{(\alpha, 0)}) = p^1_{(\beta, 0)}$, then $f(p^1_{(\alpha, 1)}) = p^1_{(\beta, 1)}$, since otherwise $f$ would send the non-trivial line $p^1_{(\alpha, 0)} \vee p^1_{(\alpha, 1)}$ to a trivial line (cf. $(\star_8)$).  
Thus, $f$ induces a bijection:
$$f_* : \{ p^1_{(\alpha, 0)} \vee p^1_{(\alpha, 1)} : \alpha < \lambda \} \rightarrow \{ p^2_{(\alpha, 0)} \vee p^2_{(\alpha, 1)} : \alpha < \lambda \}.$$
Hence, by $(\star)_1$, the map $f_*$ induces an isomorphism from $\Gamma_1$ onto $\Gamma_2$, since clearly the isomorphism $f$ sends pairs of intersecting lines to pairs of intersecting lines. Finally, item (\ref{auto}) is clear from the proof of item (\ref{iso_invariance}).
\end{proof}


\section{Proof of Theorem \ref{main_theorem_proj}}

	In this section we prove Theorem \ref{main_theorem_proj}.

\begin{notation} We denote by $Q$ the plane represented in the matrix in Figure \ref{myfigure}, where the letters occurring in the matrix represent the points of $Q$, and the columns of the matrix represent the lines of $Q$.
The plane $Q$ is taken from \cite{projective} (cf. \cite[Diagram 1]{projective}), where it is attributed to S. Ditor.

	\begin{figure}[htb]
\begin{center}
\[ \begin{bmatrix}
    a & c & e & a & b & d & d & c & e & a & b \\
    b & n & o & f & k & n & o & k & m & k & n \\
    c & l & l & g & l & k & m & g & g & o & o \\
    d & f & f & h & m & f & h &   &   &   &   \\
    e &   &   &  &    &   &   &   &   &   &   
  \end{bmatrix}
\]
\caption{The plane $Q$.}\label{myfigure}
\end{center}
\end{figure}
\end{notation}

	\begin{strategy}\label{strategy} In proving Theorem \ref{main_theorem_proj} we will follow the following strategy:
	\begin{enumerate}[(1)]
	\item for $\Gamma$ an infinite graph, consider the $P_{\Gamma}$ of Theorem \ref{main_theorem} and extend it to a $P^+_{\Gamma}$ adding independent copies of the plane $Q$ (cf. Figure \ref{myfigure}) at each point not in a finite confined subplane (cf. Definition \ref{def_conf}(2)), and then adding independent copies of $Q$ at each line not in a finite confined subplane, repeating this process for lines $\omega$-many times (for points one application of the process suffices);
	\item observe that, restricted to structures with domain $\omega$, the set of $P_{\Gamma}$'s is Borel and that the map $\Gamma \mapsto P_{\Gamma} \mapsto P^+_{\Gamma}$ is Borel; 
	\item prove that $\Gamma \mapsto P^+_{\Gamma}$ is isomorphism invariant and that $Aut(\Gamma) \cong Aut(P^+_{\Gamma})$;
	\item observe that, restricted to structures with domain $\omega$, the map $P \mapsto F(P)$ (cf. Definition \ref{def_free_ext}) is Borel;
	\item consider the free projective extension $F(P^+_{\Gamma})$ of $P^+_{\Gamma}$, and use Fact \ref{desargue_fact} for non-Desarguesianess, Fact \ref{piper_fact1} for isomorphism invariance, and Fact \ref{piper_fact2} for:
	$$Aut(\Gamma) \cong Aut(F(P^+_{\Gamma})).$$
\end{enumerate}
\end{strategy}

	First of all we deal with Strategy \ref{strategy}(4):

	\begin{lemma}\label{Borel_th} Restricted to structures with domain $\omega$, the map $P \mapsto F(P)$ associating to each plane its free projective extension is a Borel map.
\end{lemma}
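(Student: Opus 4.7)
The plan is to verify that $F(P)$, presented as a structure on $\omega$ via a canonical Borel coding, has its full atomic diagram Borel-computable from that of $P$. Granting this, the map $P \mapsto F(P)$ is Borel, because basic clopen sets in the codomain space of structures on $\omega$ are cut out by finitary atomic diagrams on finite tuples, and the preimage of any such clopen will then be a Borel subset of the space of planes on $\omega$.

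To set up the coding I would fix a Borel bijection $\beta : \omega \to T$, where $T$ is the countable set of formal construction terms: a term is either a natural number (coding a potential element of $P$) or an unordered pair of strictly lower-rank terms (coding ``the witness of parallelism of the two lines named by these two subterms''). This gives a $P$-independent ``potential domain'' for $F(P)$. The inductive definition of $F(P)$ by iterated one-point extensions (Fact \ref{fact}) then tells us that for any finite tuple of terms $(t_1, \ldots, t_m)$, the atomic diagram of the corresponding tuple in $F(P)$---that is, whether each $t_i$ actually represents an element of $F(P)$, whether it is a point or a line, and all incidence and equality relations among the represented elements---is a Boolean combination of finitely many atomic facts about $P$. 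This reduction is proved by a short induction on the maximal rank of the $t_i$'s, using Fact \ref{fact} to re-express incidence in $P_n$ in terms of incidence and parallelism in $P_{n-1}$.

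To finish I would compose with a standard ``first-occurrence enumeration'' that collapses out the invalid or redundant terms and reindexes the actual elements of $F(P)$ as a structure with domain $\omega$; this reindexing step is itself Borel in $P$, being defined by a countable sequence of conditions of the form ``the $k$-th valid term under $\beta$ is $n$''. Putting this together with the Borel-decidability of the atomic diagram on tuples, the map $P \mapsto F(P)$ is Borel.

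The main obstacle is just setting up the term coding cleanly, in particular noting that distinct terms of positive rank represent distinct elements of $F(P)$ (since each one-point extension via Fact \ref{fact} introduces a genuinely new point), so that no non-trivial identifications beyond those present at rank $0$ have to be tracked. No projective-geometric content is needed beyond the fact that Definition \ref{def_free_ext} is a transparently effective inductive procedure.
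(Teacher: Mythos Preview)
Your approach is correct and is a detailed version of the paper's own proof, which is the single line ``Essentially as in the proof of Theorem~\ref{main_theorem}'' --- i.e., the construction is explicit, so each finite piece of $F(P)$ is determined by a finite piece of $P$.

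One small caveat on the coding: your term language as described only generates new \emph{points} (meets of two line-terms), but the passage $P_m \to P_{m+1}$ also implicitly creates new \emph{lines} --- the trivial lines joining a newly added point to any point off its two defining lines --- and intersection points at later stages can involve such lines. You therefore also need line-terms $t_1 \vee t_2$ built from point-terms. Once those are present, your claim that distinct positive-rank terms represent distinct elements fails for line-terms (several pairs of points may span the same line), so identifications beyond rank $0$ do occur; but equality of line-terms is still a Boolean combination of finitely many atomic facts about $P$, and the rest of your argument goes through unchanged.
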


	\begin{proof} Essentially as in the proof of Theorem \ref{main_theorem}.
\end{proof}

	Before proving Theorem \ref{main_theorem_proj} we isolate two constructions which will be crucially used in implementing Strategy \ref{strategy}(1).
	
	\begin{construction}\label{construction1} Let $P$ be a plane and $p$ a point of $P$. We define $P(p, Q, a)$ as the extension of $P$ obtained by adding an independent copy of $Q$ to $P$ identifying the point $p$ of $P$ and the point $a$ of $Q$, in such a way that if $p'$ is a point of $P$ different than $p$, and $q$ is a point of $Q$ different than $a$, then $p' \vee q$ is a trivial line.
\end{construction}

	\begin{construction}\label{construction2} Let $P$ be a plane and $\ell$ a line of $P$. We define $P(\ell, Q, a \vee b)$ as the extension of $P$ obtained by adding an independent copy of $Q$ to $P$ identifying the line $\ell$ of $P$ and the line $a \vee b$ of $Q$, in such a way that if $p'$ is a point of $P$ not on $\ell$, and $q$ is a point of $Q$ not on $a \vee b$, then $p' \vee q$ is a trivial line.
\end{construction}

	\begin{remark} The construction of $P(p, Q, a)$ and $P(\ell, Q, a \vee b)$ from $P$ can be formally justified using Fact \ref{fact}. We elaborate on this:
	\begin{enumerate}[(i)]
	\item Concerning the case $P(p, Q, a)$. Add two generic points\footnote{I.e. $b$ and $f$ are not incident with any line of $P$.} $b$ and $f$ to $P$, corresponding to the points $b$ and $f$ of $Q$. Then $\langle p, b, f \rangle_P \cong \langle a, b, f \rangle_Q$ is a copy of the simple matroid of rank $3$ and size $3$. Now construct a copy of $Q$ in $P$ from $\{ p, b, f \}$ point by point, following how $Q$ is constructed from $\{ a, b, f \}$ point by point. Notice that the order in which we do this does not matter.
	\item Concerning the case $P(\ell, Q, a \vee b)$. Firs of all, let $p$ and $q$ be points of $P$ such that $p \vee q = \ell$. Now, add one generic point\footnote{I.e. $f$ is not incident with any line of $P$.} $f$ to $P$, corresponding to the point $f$ of $Q$. Then $\langle p, q, f \rangle_P \cong \langle a, b, f \rangle_Q$ is a copy of the simple matroid of rank $3$ and size $3$. Now construct a copy of $Q$ in $P$ from $\{ p, q, f \}$ point by point, following how $Q$ is constructed from $\{ a, b, f \}$ point by point. Notice that the choice of $p$ and $q$ does not matter, as well as the order in which we construct the copy of $Q$ in $P$ from $\{ p, q, f \}$, as observed also in (i).
\end{enumerate}	
\end{remark}

\begin{proof}[Proof of Theorem \ref{main_theorem_proj}] We follow the strategy delineated in Strategy \ref{strategy}. Let $\Gamma$ be an infinite graph and $P_{\Gamma}$ be the respective plane from Theorem \ref{main_theorem}. We define $P^+_{\Gamma}$ as the union of a chain of planes $(P^n_{\Gamma}: n < \omega)$, defined by induction on $n < \omega$. 
\newline \underline{$n = 0$}. Let $\{ p_{\alpha} : 0 < \alpha < \kappa \}$ be an injective enumeration of the points of $P_{\Gamma}$ not in a finite confined configuration (notice that there infinitely many such point in $P_{\Gamma}$). Let then:
\begin{enumerate}[(i)]
	\item $P^{(0, 0)}_{\Gamma} = P_{\Gamma}$;
	\item $P^{(0, \alpha)}_{\Gamma} = P^{(0, \alpha-1)}_{\Gamma}(p_{\alpha}, Q, a)$, for $0 <\alpha < \kappa$ successor (cf. Construction \ref{construction1});
	\item $P^{(0, \alpha)}_{\Gamma} = \bigcup_{\beta < \alpha} P^{(0, \beta)}_{\Gamma}$, for $\alpha$ limit;
	\item $P^0_{\Gamma} = \bigcup_{\alpha < \kappa} P^{(0, \alpha)}_{\Gamma}$.
\end{enumerate}
(Notice that the choice of the enumeration $\{ p_{\alpha} : 0 < \alpha < \kappa \}$ does not matter, since the copies of $Q$ that we add at every point are independent. In particular, in the countable case we can take the enumeration to be Borel. Furthermore, we now have that every point of $P^0_{\Gamma}$ is contained in a finite confined subplane of $P^0_{\Gamma}$.)
\newline \underline{$n > 0$}. Let $\{ \ell_{\alpha} : 0 < \alpha < \mu \}$ be an injective enumeration of the lines of $P^{n-1}_{\Gamma}$ not in a finite confined configuration (notice that there infinitely many such lines in $P^{n-1}_{\Gamma}$, this is true for $n - 1 = 0$, and it is preserved by the induction). Let then:
\begin{enumerate}[(i)]
	\item $P^{(n, 0)}_{\Gamma} = P^{n-1}_{\Gamma}$;
	\item $P^{(n, \alpha)}_{\Gamma} = P^{(0, \alpha-1)}_{\Gamma}(\ell_{\alpha}, Q, a \vee b)$, for $0 <\alpha < \mu$ successor (cf. Construction \ref{construction2});
	\item $P^{(n, \alpha)}_{\Gamma} = \bigcup_{\beta < \alpha} P^{(n, \beta)}_{\Gamma}$, for $\alpha$ limit;
	\item $P^n_{\Gamma} = \bigcup_{\alpha < \mu} P^{(n, \alpha)}_{\Gamma}$.
\end{enumerate}
(Notice that also in this case the choice of the enumeration $\{ \ell_{\alpha} : 0 < \alpha < \mu \}$ does not matter, since the copies of $Q$ that we add at every line are independent. In particular, in the countable case we can take the enumeration to be Borel. Furthermore, inductively, we maintain the condition that every point of $P^n_{\Gamma}$ is contained in a finite confined subplane of $P^n_{\Gamma}$ (although this is not true for lines).)
\newline Let then $P^+_{\Gamma} = \bigcup_{n < \omega} P^n_{\Gamma}$. First of all, observe that the class of $P_{\Gamma}$'s ($P_\Gamma$ and $\Gamma$ with domain $\omega$) is Borel, since the appropriate restriction of the map $\Gamma \mapsto P_{\Gamma}$ is injective, in fact if $\Gamma  \neq \Gamma'$, then there are $n \neq k \in \omega$ such that $n E_{\Gamma} k$ and $n \!\!\not\!\!E_{\Gamma'} k$ (by symmetry) and so in $P_{\Gamma}$ the (codes of the) lines $p_{(n, 0)} \vee p_{(n, 1)}$ and $p_{(k, 0)} \vee p_{(k, 1)}$ are incident while in $\Gamma'$ they are parallel. Furthermore, by the uniformity of the construction, the map $P^+_{\Gamma}$ from $P_{\Gamma}$ is Borel, when restricted to structures with domain $\omega$. Also, notice that the plane $P^+_{\Gamma}$ is confined and not projective, and so if we manage to complete Strategy \ref{strategy}(3), then by Lemma \ref{Borel_th} and Facts \ref{desargue_fact}, \ref{piper_fact1} and \ref{piper_fact2} we are done (as delineated in Strategy \ref{strategy}(4-5)). We are then only left with Strategy \ref{strategy}(3). To this extent notice that:
\begin{enumerate}[$(\star_1)$]
	\item the points from $P^+_{\Gamma}$ which are incident with at least four non-trivial lines are exactly the points of $P_{\Gamma}$.
\end{enumerate} 
Thus, from $(\star_1)$ it is clear that if $P^+_{\Gamma_1} \cong P^+_{\Gamma_2}$, then $P_{\Gamma_1} \cong P_{\Gamma_2}$, which in turn implies that $\Gamma_1 \cong \Gamma_2$ (cf. Theorem \ref{main_theorem}(5)). Furthermore, using again $(\star_1)$, and the fact that by \cite[Lemma 1]{projective} the plane $Q$ has trivial automorphism group, it is easy to see that:
\begin{enumerate}[$(\star_2)$]
	\item every $f \in Aut(P^+_{\Gamma})$ is induced by a $f^- \in Aut(P_{\Gamma})$;
\end{enumerate} 
\begin{enumerate}[$(\star_3)$]
	\item every $f \in Aut(P_{\Gamma})$ extends uniquely to a $f^+ \in Aut(P^+_{\Gamma})$.
\end{enumerate} 
Thus, we have that $Aut(P^+_{\Gamma}) \cong Aut(P_{\Gamma}) \cong Aut(\Gamma)$, by Theorem \ref{main_theorem}(5).
\end{proof}

\section{Other proofs}

Corollary \ref{main_cor2} is a standard consequence of Theorems \ref{main_theorem} and \ref{main_theorem_proj} (see e.g. \cite{friedman} and \cite{gao} for an overview on Borel completeness, and \cite{ulm_inv_paper} for Ulm invariants). Also, Corollary \ref{main_cor} follows from Theorems \ref{main_theorem} and \ref{main_theorem_proj} and the following fact:

	\begin{fact} 
\begin{enumerate}[(1)]
	\item For every finite structure $M$ (in the sense of model theory) there exists a finite graph $\Gamma_M$ such that $Aut(\Gamma_M) \cong Aut(M)$.
	\item For every infinite structure $M$ (in the sense of model theory) there exists a graph $\Gamma_M$ of the same cardinality of $M$ such that $Aut(\Gamma_M) \cong Aut(M)$.
\end{enumerate}
\end{fact}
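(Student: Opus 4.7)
The plan is to reduce in two stages: first from arbitrary structures to relational structures, then from relational structures to graphs. Both stages are classical, and the statement is essentially a standard folklore extension of the Frucht / Sabidussi / de Groot theorems. The size bookkeeping (finite yields finite, infinite of cardinality $\kappa$ yields a graph of the same cardinality $\kappa$) will follow automatically because every gadget used is finite and at most one gadget is attached per element, per constant, per function-argument-tuple, and per relation-tuple.

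First, I would reduce to the purely relational setting. Given $M$ in a language with function symbols, constants, and relations, replace each $n$-ary function $f$ by its graph (an $(n+1)$-ary relation), each constant $c$ by a unary relation singling out its interpretation, and keep relations as they are. This new relational structure $M'$ has the same domain as $M$ and $\mathrm{Aut}(M')=\mathrm{Aut}(M)$. The number of new symbols and tuples added is bounded by $|M|+\aleph_0$ in the infinite case, and is finite in the finite case, so cardinalities are preserved.

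Second, I would build $\Gamma_M$ from $M'$ by a standard gadget construction. Start with a vertex $x_a$ for every $a\in M'$. For every relation symbol $R$ of arity $n$ and every tuple $(a_1,\dots,a_n)\in R^{M'}$ introduce a fresh \emph{tuple vertex} $t$ adjacent to $x_{a_1},\dots,x_{a_n}$, but with an \emph{ordering gadget} attached so that the positions $1,\dots,n$ inside the tuple are distinguishable. Concretely, to each edge $\{t,x_{a_i}\}$ I would subdivide and hang a pendant tree (or attach a path) of length depending on both $i$ and a code for $R$; pendant paths of pairwise distinct lengths are the standard way to simulate vertex/edge \emph{colors} inside a plain graph, and they force any graph automorphism to respect the colors. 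Analogous ordering/coloring gadgets distinguish (a) the original domain vertices $x_a$ from all auxiliary vertices, and (b) tuples belonging to different relation symbols. In the finite case all length parameters can be chosen to be distinct natural numbers; in the infinite case the same finite list of lengths works, as only finitely many symbols and finitely many arities are involved for each tuple.

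The verification that $\mathrm{Aut}(\Gamma_M)\cong\mathrm{Aut}(M')$ proceeds as usual. Any $\sigma\in\mathrm{Aut}(M')$ extends to a graph automorphism by acting naturally on domain vertices and permuting tuple vertices accordingly, with the pendant/colour gadgets carried along rigidly. Conversely, a graph automorphism $\tau$ of $\Gamma_M$ must preserve the distinguishing pendant paths, hence it fixes setwise the set of domain vertices, the set of tuple vertices, and the color/position information inside each tuple; reading off the action on $\{x_a:a\in M'\}$ yields an automorphism of $M'$. The two maps are mutually inverse, establishing the isomorphism of automorphism groups. The main technical nuisance I expect is precisely this rigidity check: one has to be sure the pendant lengths have been chosen so that the only graph automorphisms are those induced by permutations of the domain that respect every relation, with no spurious symmetries between different tuple gadgets or between the original vertices and auxiliary subdivision vertices. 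Once the length-coding scheme is fixed carefully, this is a routine case analysis, and the cardinality claims in (1) and (2) follow from the fact that only finitely many auxiliary vertices are added per tuple and there are at most $|M|+\aleph_0$ tuples in total.
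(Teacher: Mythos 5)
The paper does not actually prove this statement: it is recorded as a \emph{Fact} with no argument, resting on the classical literature recalled in the introduction (Frucht for finite groups, Sabidussi and de Groot for arbitrary groups, and the folklore extension from groups to structures). So there is nothing in the paper to compare your proof against line by line; what you have written is precisely the standard two-stage reduction (structures $\to$ relational structures $\to$ graphs via pendant-path colour gadgets) that those sources carry out, and in outline it is correct.

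Two points in your write-up deserve tightening. First, in the infinite case you assert that ``the same finite list of lengths works.'' That is only true per gadget: if the language has infinitely many relation symbols, distinct symbols need distinct codes, so the family of pendant lengths used across the whole graph must be infinite (one length per symbol suffices, each gadget stays finite, and the cardinality count is unaffected) --- but as stated the sentence is wrong. Second, and relatedly, your cardinality bookkeeping assumes the language has at most $|M|+\aleph_0$ symbols; a structure ``in the sense of model theory'' may carry a larger language, in which case the number of tuple gadgets could exceed $|M|$. The standard repair is to first replace $M$ by its canonical structure: since $\mathrm{Aut}(M)$ is a closed subgroup of $\mathrm{Sym}(M)$, it is the automorphism group of a relational structure on the same domain with at most $|M|+\aleph_0$ relation symbols (one per orbit on finite tuples), and one then applies your construction to that. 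With these two repairs the argument goes through, including the rigidity check you correctly identify as the only real work.
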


	Finally, we prove Theorem \ref{des_theorem}. To this extent we need the following fact.

	\begin{fact}[{\cite[3.2]{friedman}}]\label{field_fact} The class of countable fields is Borel complete.
\end{fact}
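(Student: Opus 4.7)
The plan is to exhibit a Borel reduction from the class of countable graphs (which is Borel complete, by Friedman--Stanley, and in any case implicit in Theorems \ref{main_theorem} and \ref{main_theorem_proj} of this paper) to the class of countable fields. Since Borel completeness is preserved under Borel reductions, this suffices. Given a countable graph $\Gamma = (V,E)$ with $V \subseteq \omega$, the goal is to produce a countable field $K_{\Gamma}$ of characteristic $0$ in a Borel, isomorphism-invariant way so that $\Gamma_1 \cong \Gamma_2$ if and only if $K_{\Gamma_1} \cong K_{\Gamma_2}$.

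The first step is the construction. I would begin with the purely transcendental extension $L_{\Gamma} = \mathbb{Q}(\{t_v : v \in V\})$, where the $t_v$ are algebraically independent indeterminates indexed by the vertices of $\Gamma$. I would then adjoin an algebraic element $s_e$ for each edge $e = \{u,v\} \in E$ satisfying a polynomial equation that distinguishes the unordered pair $\{t_u, t_v\}$, for instance $s_e^2 = (t_u - t_v)^2 + 1$ (or a similar symmetric polynomial whose presence in $K_{\Gamma}$ as a square is witnessed only by the intended pair). To pin down the transcendence basis and prevent spurious field automorphisms that do not come from $\mathrm{Aut}(\Gamma)$, I would further mark each $t_v$ with a rigidifying algebraic gadget: for example, a tower of radical extensions $\sqrt[p_n]{t_v - r_n}$ indexed by a fixed enumeration of primes $p_n$ and rationals $r_n$, chosen so that any field automorphism of $K_{\Gamma}$ must permute the $t_v$'s among themselves.

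The second step is Borelness: a finite substructure of $K_{\Gamma}$ is determined by a finite substructure of $\Gamma$ (one exhibits $K_{\Gamma}$ as an increasing union of finitely generated subfields built along a Borel enumeration of $V \cup E$), so the map $\Gamma \mapsto K_{\Gamma}$, restricted to structures with domain $\omega$, is Borel. Isomorphism invariance in the easy direction is immediate: a graph isomorphism $\Gamma_1 \cong \Gamma_2$ lifts by renaming the indeterminates to a field isomorphism $K_{\Gamma_1} \cong K_{\Gamma_2}$.

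The hard part, as expected, is the converse (isomorphism reflection). I would need to show that any field isomorphism $\varphi : K_{\Gamma_1} \to K_{\Gamma_2}$ restricts to a bijection between the respective vertex transcendentals that preserves the edge relation. The core difficulty is that a transcendence basis of a purely transcendental extension is very far from canonical; without the rigidity markers, a field automorphism could scramble the $t_v$'s with non-trivial rational functions. The argument therefore hinges on showing that the intended vertex transcendentals become \emph{definable up to $\mathrm{Aut}(K_{\Gamma})$} thanks to the rigidifying towers — i.e., they are exactly the elements admitting the prescribed profile of radical extensions over $\mathbb{Q}$. Once this is established, the edges are read off from the field as those pairs $\{t_u, t_v\}$ for which $(t_u - t_v)^2 + 1$ is a square in $K_{\Gamma}$, and a graph isomorphism $\Gamma_1 \cong \Gamma_2$ is recovered. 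Verifying the rigidity of the markers (no ambient field-theoretic coincidences between the gadgets of different vertices, and no interaction with the edge witnesses $s_e$) is the main technical step, and in Friedman--Stanley it is handled by a careful combinatorial choice of the primes $p_n$ and rationals $r_n$ used in the towers.
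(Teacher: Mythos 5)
The paper does not actually prove this statement: it is imported as a black-box Fact with the citation \cite[3.2]{friedman}, and its only use is to be combined with Facts \ref{pappian_field_fact} and \ref{pappian_fact} in the two-line proof of Theorem \ref{des_theorem}. So there is no proof in the paper to compare yours against; the relevant benchmark is the Friedman--Stanley argument itself, which your sketch outlines but does not carry out.

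As a self-contained proof your proposal has a genuine gap, and it sits exactly where you yourself locate the ``main technical step'': you never establish that the rigidifying gadgets work. Two concrete things are missing. First, you must prove that the set $\{t_v : v \in V\}$ is recoverable from the abstract field $K_{\Gamma}$, i.e.\ that any $u \in K_{\Gamma}$ such that $u - r_n$ has a $p_n$-th root for every $n$ is in fact one of the $t_v$; a priori some rational function of the $t_v$'s and of the adjoined radicals could have the same profile, and excluding this requires an actual field-theoretic argument (specializations/places over the purely transcendental base, or Kummer-theoretic bookkeeping), not merely a ``careful combinatorial choice'' of the $p_n$ and $r_n$. Second, you must prove that $(t_u - t_v)^2 + 1$ is a square in $K_{\Gamma}$ \emph{only} when $\{u,v\}$ is an edge: after adjoining infinitely many radicals and the $s_e$'s it is not automatic that no unintended square roots are created, and controlling these interactions is precisely what makes reductions from graphs to fields delicate. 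These two points are the entire mathematical content of the theorem; the Borelness of $\Gamma \mapsto K_{\Gamma}$ and the easy direction of isomorphism invariance, which you do handle, are routine. Deferring the hard part with ``in Friedman--Stanley it is handled by a careful combinatorial choice'' collapses your argument back to the very citation the paper already gives.
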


	\begin{proof}[Proof of Theorem \ref{des_theorem}] Immediate from Facts \ref{pappian_field_fact}, \ref{pappian_fact} and \ref{field_fact}.
\end{proof}

\end{document}